\pgfplotsset{compat=newest}
\newcolumntype{C}{>{$}c<{$}}
\newtheorem{theorem}{Theorem}
\newtheorem{lemma}{Lemma}
\newtheorem{exmp}{Example}
\begin{document}
\title{$Z$-knotted and $Z$-homogeneous triangulations of surfaces}
\author{Adam Tyc}
\subjclass[2000]{}
\keywords{embeded graph, triangulation, $z$-homogeneous triangulation, zigzag, $z$-knotted triangulation, $z$-monodromy, $z$-orientation}

\address{Adam Tyc: Institute of Mathematics, Polish Academy of Sciences, \'Sniadeckich 8, 00-656 Warszawa, Poland}
\email{atyc@impan.pl}

\maketitle
\begin{abstract}
A triangulation is called $z$-knotted if it has a single zigzag (up to reversing). 
A $z$-orientation on a triangulation is a minimal collection of zigzags which double covers the set of edges. 
An edge is of type I if zigzags from the $z$-orientation pass through it in different directions, otherwise this edge is of type II. 
If all zigzags from the $z$-orientation contain precisely two edges of type I after any edge of type II, then the $z$-oriented triangulation is said to be $z$-homogeneous. 
We describe an algorithm transferring each $z$-homogeneous trianguation to other $z$-homogeneous triangulation which is also $z$-knotted. 
\end{abstract}

\section{Introduction}
{\it Zigzags} \cite{DDS-book, Lins1} (also known as {\it Petrie walks} \cite{Coxeter} and {\it closed left-right paths} \cite{GR-book, Shank}) are sequences of edges in a graph embedded in a surface such that any two consecutive edges are different, have a common vertex and are contained in a certain face, but for any three consecutive edges there is no face containing them. 
Zigzags are successfully used in various branches of mathematics. 
They can be realized as the projections of links in knot theory \cite{DDS-book}. 
In chemical graph theory, zigzags in graphs embedded into a sphere are applied to enumerating all combinatorial possibilities for fullerenes \cite{BD, DDS-book}. 
Embedded graphs which contain a single zigzag, i.e. {\it $z$-knotted} graphs, are closely connected to {\it Gauss code problem} \cite{CrRos, GR-book, Lins2}. 
In fact, zigzags of such embedded graphs are geometrical realizations of Gauss codes. 
Infinite series of $z$-knotted triangulations for any closed (not necessarily) oriented surfaces were constructed in \cite{PT1}. 

The concept of {\it $z$-monodromy} for triangle faces is introduced in \cite{PT2}. 
The $z$-monodromy $M_F$ of a face $F$ is a permutation of six oriented edges of this face. 
For two consecutive oriented edges $e_0$ and $e$ in $F$ we consider the unique zigzag containing the sequence $e_0, e$ and define $M_F(e)$ as the first oriented edge of $F$ which occurs in this zigzag after $e$. 
There are precisely $7$ types of $z$-monodromies and each of these types is realized. 
The $z$-monodromy is used to prove that any triangulation $\Gamma$ of an arbitrary (not necessarily oriented) closed surface admits a {\it $z$-knotted shredding}, i.e. $\Gamma$ can be modified to a $z$-knotted triangulation by triangulating several of its faces \cite[Theorem 3.1]{PT2}. 
Some combinatorial properties of $z$-monodromies can be found in \cite{PT3}. 

A {\it $z$-orientation} is a collection of zigzags which double covers the set of edges in an embedded graph \cite{DDS-book}. 
For the $z$-knotted case, there is a unique $z$-orientation (up to reversing). 
In general, there are two possibilities for an edge: zigzags from the $z$-orientation pass through the edge in different directions or twice in the same direction. 
We say that the edge is {\it of type} I in the first case or it is {\it of type} II in the second case. 
For every face in a triangulation we have the following possibilities: 
two edges are of type I and the third is of type II or all edges are of type II.
If the first case is realized for all faces, then the number of edges of type I is the twofold number of edges of type II and we say that a $z$-oriented triangulation is {\it $z$-homogeneous} if all zigzags from the $z$-orientation contain precisely two edges of type I after any edge of type II. 
Such triangulations are important for the following reason: 
there is a one-to-one correspondence between $z$-homogeneous triangulations and embeddings of simple Eulerian digraphs into surfaces \cite{T1}. 
However, the $z$-homogenity is related to the fixing of $z$-orientation. 
The uniqueness of $z$-orientation (up to reversing) is equivalent to the $z$-knottedness. 
So, it will be significant to obtain a $z$-knotted triangulation, i.e. a triangulation with a unique $z$-orientation up to reversing, which is $z$-homogeneous. 

The main result of this paper states that every $z$-homogeneous triangulation can be modified in a $z$-homogeneous triangulation which is also $z$-knotted. 
Our key tool is the $z$-monodromy for a pair of consecutive edges of type II. 
There are precisely $13$ classes of such $z$-monodromies and each of them is realized (the realization of some of these classes is established by computer).
In our construction, we use the connected sum of $z$-homogeneous triangulations by such special pairs of edges. 
We take two $z$-homogeneous triangulations and cut each of them by a special pair of edges. 
The gluing gives a $z$-homogeneous triangulation of the connected sum of the corresponding surfaces. 
Now, we take an arbitrary $z$-homogeneous triangulation $\Gamma$. 
Using properties of $z$-monodromies for pairs of edges, we obtain a $z$-homogeneous triangulation (of the same surface) by gluing to $\Gamma$ some $z$-homogeneous triangulations of spheres and show that this new $z$-homogeneous triangulation is $z$-knotted. 

\section{Zigzags and $z$-orientations of embedded graphs}

Let $M$ be a connected closed $2$-dimensional (not necessarily orientable) surface and let $\Gamma$ be a closed $2$-cell embedding of a connected simple finite graph in $M$. 
We assume that this graph does not contain vertices of degree $2$. 
Two distinct edges are {\it adjacent} if there is a face containing them and the edges have a common vertex. 

A {\it zigzag} in $\Gamma$ is a sequence of edges $\{e_i\}_{i\in\mathbb{N}}$ satisfying the following conditions for every $i\in\mathbb{N}$:
\begin{enumerate}
\item[$\bullet$] $e_i$ and $e_{i+1}$ are adjacent,
\item[$\bullet$] the faces containing $e_i, e_{i+1}$ and $e_{i+1}, e_{i+2}$ are distinct and $e_i, e_{i+2}$ do not have a common vertex.
\end{enumerate}
Since $\Gamma$ is finite, for every zigzag $Z=\{e_{i}\}_{i\in {\mathbb N}}$
there is a natural number $n>0$ such that $e_{i+n}=e_{i}$ for every $i\in {\mathbb N}$, i.e. $Z$ is a cyclic sequence. 
For any pair of adjacent edges there is a zigzag containing them. 
Conversely, any zigzag is completely determined by a pair of consecutive edges belonging to it.
If $Z=\{e_1,\dots e_n\}$ is a zigzag, then the squence $Z^{-1}=\{e_n,\dots e_1\}$ also is a zigzag which will be called {\it reversed}. 
A zigzag cannot be self-reversed, i.e. $Z\neq Z^{-1}$ for any zigzag $Z$ (see \cite{PT2}).
We say that $\Gamma$ is {\it $z$-knotted} if it contains precisely one pair of zigzags, in other words, there is a single zigzag up to reversing. 

A $z$-{\it orientation} $\tau$ of $\Gamma$ is a collection of zigzgas such that for each zigzag $Z$ we have $Z\in \tau$ or $Z^{-1}\in \tau$. 
If $\Gamma$ contains precisely $k$ zigzags up to reversing, then there are $2^k$ distinct $z$-orientations of $\Gamma$.
For every $z$-orientation $\tau=\{Z_{1},\dots,Z_{k}\}$ the $z$-orientation $\tau^{-1}=\{Z^{-1}_{1},\dots, Z^{-1}_{k}\}$ will be called {\it reversed} to $\tau$. 
The embedded graph $\Gamma$ with a $z$-orientation $\tau$ is denoted by $(\Gamma, \tau)$ and called a {\it $z$-oriented embedded graph}.

Let $\tau$ be a $z$-orientation of $\Gamma$. 
For every edge $e$ of $\Gamma$ one of the following possibilities is realized:
\begin{enumerate}
\item[$\bullet$] there is a zigzag $Z\in \tau$ such that $e$ occurs in this zigzag twice and other zigzags from $\tau$
do not contain $e$,
\item[$\bullet$] there are two distinct zigzags $Z,Z'\in\tau$ such that $e$ occurs in each of these zigzags only ones 
and other zigzags from $\tau$ do not contain $e$. 
\end{enumerate}
In the first case, if $Z$ passes through $e$ twice in the opposite or in the same direction, then $e$ is an edge {\it of type} I or {\it of type} II, respectively. 
Similarly, in the second case: $e$ is an edge {\it of type} I or {\it of type} II if $Z$ and $Z'$ pass through $e$ in different directions or in the same direction, respectively. 
Thus, $\tau$ induces a direction of any edge of type II.
The reversing of a $z$-orientation does not change types of edges, but it reverses directions of edges of type II.

By \cite[Proposition 1]{T1}, if a face of $(\Gamma, \tau)$ is a triangle, then one of the following possibilities is realized:
\begin{enumerate}
\item[{\rm (I)}] the face contains two edges of type {\rm I} and the third edge is of type {\rm II}, see {\rm Fig.1(a)};
\item[{\rm (II)}] all edges of the face are of type {\rm II} and form a directed cycle, see {\rm Fig.1(b)}.
\end{enumerate}

\begin{center}
\begin{tikzpicture}[scale=0.6]

\draw[fill=black] (0,2) circle (3pt);
\draw[fill=black] (-1.7320508076,-1) circle (3pt);
\draw[fill=black] (1.7320508076,-1) circle (3pt);

\draw [thick, decoration={markings,
mark=at position 0.62 with {\arrow[scale=1.5,>=stealth]{>>}}},
postaction={decorate}] (5.1961524228,2) -- (3.4641016152,-1);

\draw [thick, decoration={markings,
mark=at position 0.62 with {\arrow[scale=1.5,>=stealth]{>>}}},
postaction={decorate}] (3.4641016152,-1) -- (6.9282032304,-1);

\draw [thick, decoration={markings,
mark=at position 0.62 with {\arrow[scale=1.5,>=stealth]{<<}}},
postaction={decorate}] (5.1961524228,2) -- (6.9282032304,-1);

\node at (0,-1.65) {(a)};

\draw[fill=black] (5.1961524228,2) circle (3pt);
\draw[fill=black] (3.4641016152,-1) circle (3pt);
\draw[fill=black] (6.9282032304,-1) circle (3pt);

\draw [thick, decoration={markings,
mark=at position 0.62 with {\arrow[scale=1.5,>=stealth]{><}}},
postaction={decorate}] (0,2) -- (-1.7320508076,-1);

\draw [thick, decoration={markings,
mark=at position 0.62 with {\arrow[scale=1.5,>=stealth]{>>}}},
postaction={decorate}] (-1.7320508076,-1) -- (1.7320508076,-1);

\draw [thick, decoration={markings,
mark=at position 0.62 with {\arrow[scale=1.5,>=stealth]{><}}},
postaction={decorate}] (0,2) -- (1.7320508076,-1);

\node at (5.1961524228,-1.65) {(b)};
\end{tikzpicture}
\captionof{figure}{ }
\end{center}
This face is said to be {\it of type} I or {\it of type} II if the corresponding possibility is realized. 
The reversing of the $z$-orientation $\tau$ does not change types of faces. 

From this moment, we assume that $(\Gamma, \tau)$ is a $z$-oriented triangulation, i.e. each face is a triangle. 
If all faces are of type I, then the number of edges of type I is the twofold number of edges of type II. 
In this case, a zigzag of $\Gamma$ is said to be {\it homogeneous} if it is a cyclic sequence $\{e_{i},e'_{i},e''_{i}\}^{n}_{i=1}$,
where each $e_{i}$ is an edge of type II and all $e'_{i},e''_{i}$ are edges of type I. 
If all zigzags of $(\Gamma, \tau)$ are homogeneous, then we say that $(\Gamma, \tau)$ is {\it $z$-homogeneous}. 
If $(\Gamma, \tau)$ is $z$-homogeneous, then the same holds for $(\Gamma, \tau^{-1})$. 

Denote by $\Gamma_{II}$ the subgraph of $\Gamma$ formed by all edges of type II and all vertices on these edges. 
Following \cite[Proposition 3]{T1}, we describe a one-to-one correspondence between $z$-homogeneous triangulations and embeddings of simple Eulerian digraphs into surfaces. 
If $(\Gamma,\tau)$ is a $z$-homogeneous triangulation, then $\Gamma_{II}$ is a closed $2$-cell embedding of a simple Eulerian digraph such that every face is a directed cycle. 
Conversely, let us consider closed $2$-cell embedding of a simple Eulerian digraph $\Gamma'$ such that every face is a directed cycle. 
Denote by ${\rm T}(\Gamma')$ a triangulation obtained in the following way: 
in the interior of any face of $\Gamma'$ we add a vertex and join it with each vertex of this face by an edge. 
Then there exists a unique $z$-orientation $\tau$ such that $({\rm T}(\Gamma'),\tau)$ is $z$-homogeneous and the set of its edges of type II is precisely the set of all edges of $\Gamma'$ (with the same orientations). 

\begin{exmp}\label{ex1}{\rm
The $n$-gonal bipyramid $BP_n$, $n\geq 3$, is a triangulation of $\mathbb{S}^2$ containing an $n$-gone with consecutive vertices $v_1,\dots,v_n$ and two disjoint vertices $a, b$ connected with all vertices of the $n$-gone (see Fig. 2 for $n=3$). 
\begin{center}
\begin{tikzpicture}[scale=0.4]

\coordinate (A) at (0.5,4);
\coordinate (B) at (0.5,-5);
\coordinate (1) at (0,-1);
\coordinate (2) at (-2,0);
\coordinate (3) at (3,0);

\draw[fill=black] (A) circle (3.5pt);
\draw[fill=black] (B) circle (3.5pt);

\draw[fill=black] (1) circle (3.5pt);
\draw[fill=black] (2) circle (3.5pt);
\draw[fill=black] (3) circle (3.5pt);

\draw[thick, line width=1.75pt] (3)--(1)--(2);
\draw[thick, line width=1.75pt, dashed] (2)--(3);

\draw[thick] (A)--(1);
\draw[thick] (A)--(2);
\draw[thick] (A)--(3);

\draw[thick] (B)--(1);
\draw[thick] (B)--(2);
\draw[thick] (B)--(3);

\node at (-2.5,0) {$v_1$};
\node at (0.55,-1.4) {$v_2$};
\node at (3.58,0) {$v_3$};

\node at (0.5,4.5) {$a$};
\node at (0.5,-5.6) {$b$};

\end{tikzpicture}
\captionof{figure}{ }
\end{center}
Suppose that $n=2k+1$. If $k$ is an odd number, then $BP_n$ contains the unique zigzag (up to reversing)
$$av_1,v_1v_2,v_2b,bv_3,\dots,av_{n-2},v_{n-2}v_{n-1},v_{n-1}b,bv_n,v_nv_1,$$
$$v_1a,av_2,v_2v_3,v_3b,\dots, av_{n-1},v_{n-1}v_n,v_nb,$$
$$bv_1,v_1v_2,v_2a,av_3,\dots,bv_{n-2},v_{n-2}v_{n-1},v_{n-1}a,av_n,v_nv_1,$$
$$v_1b,bv_2,v_2v_3,v_3a,\dots,bv_{n-1},v_{n-1}v_n, v_na.$$
In the case when $k$ is even, then $BP_n$ also has the single zigzag (up to reversing)
$$av_1,v_1v_2,v_2b,bv_3,\dots,bv_{n-2},v_{n-2}v_{n-1}, v_{n-1}a,av_n,v_nv_1,$$
$$v_1b,bv_2,v_2v_3,v_3a,\dots,av_{n-1},v_{n-1}v_n,v_nb,$$
$$bv_1,v_1v_2,v_2a,av_3,\dots,av_{n-2},v_{n-2}v_{n-1},v_{n-1}b,bv_n,v_nv_1,$$
$$v_1a,av_2,v_2v_3,v_3b,\dots,bv_{n-1},v_{n-1}v_n, v_na.$$
Thus, for any odd $n$ the bipyramid $BP_n$ is $z$-knotted and has precisely one $z$-orientation (up to reversing). 
All edges of the base are of type II and the remaining edges are of type I.  
Each face is of type I and $BP_n$ is $z$-homogeneous. 
}\end{exmp}

\begin{exmp}\label{ex1b}{\rm
We assume that $n=2k$ and $k$ is odd. Zigzags of $BP_n$ are
$$av_1,v_1v_2,v_2b,bv_3,v_3v_4,\dots,av_{n-1},v_{n-1}v_n, v_nb,$$
$$bv_1,v_1v_2,v_2a,av_3,v_3v_4,\dots,bv_{n-1},v_{n-1}v_n, v_na$$
and 
$$av_2,v_2v_3,v_3b,bv_4,v_4v_5,\dots,av_n,v_nv_1,v_1b,$$
$$bv_2,v_2v_3,v_3a,av_4,v_4v_5,\dots,bv_n,v_nv_1,v_1a$$
and their reversions. 
We write $Z_1$ for the first and $Z_2$ for the second of the above zigzags. 
A direct verification shows that any edge of the base is of type II for any $z$-orientation. 
For the $z$-orientation $\tau=\{Z_1, Z_2\}$ the edges which does not belong to the base are of type I. 
Again, all faces are of type I and $(BP_n,\tau)$ is $z$-homogeneous. 
Observe that for the $z$-orientation $\tau'=\{Z_1, Z^{-1}_2\}$ the bipyramid $BP_n$ is not $z$-homogeneous. 
If $n=2k$ and $k$ is even, then $BP_n$ has the following four zigzags (up to reversing):
$$av_1,v_1v_2, v_2b,\dots,bv_{n-1},v_{n-1}v_n, v_na;$$ 
$$bv_1,v_1v_2, v_2a,\dots,av_{n-1},v_{n-1}v_n, v_nb;$$
$$av_2,v_2v_3,v_3b,\dots,bv_n,v_nv_1,v_1a;$$
$$bv_2,v_2v_3,v_3a,\dots,av_n,v_nv_1,v_1b.$$
For the $z$-orientation consisting of the above zigzags $BP_n$ is $z$-homogeneous and an edge is of type II only in the case when it belongs to the base. 
}\end{exmp}

\begin{exmp}\label{ex2}{\rm
We construct an infinite series of $z$-homogeneous spherical triangulations different from bipyramids. 
Let $\Gamma'$ be a graph embedded in $\mathbb{S}^2$ and consisting of two vertices $a,b$ and four paths $P_1,P_2,P_3,P_4$ such that at most one of these paths is an edge. 
We also require that any two of these paths intersect precisely in $a$ and $b$, see Fig.\!\! 3. 
Thus, $\mathbb{S}^2$ is decomposed into four disks whose boundaries are $P_1\cup P_2$, $P_2\cup P_3$, $P_3\cup P_4$ and $P_1\cup P_4$. 
We add a vertex in the interior of each of these disks and connect it by edges with the vertices of the boundary. 
We denote this spherical triangulation by $\Gamma_{p_1,p_2,p_3,p_4}$ where $p_i$ is the number of edges in the path $P_i$ (see Fig.\!\! 3 for $\Gamma_{2,3,3,3}$). 
Any cyclic permutation of $p_1,p_2,p_3,p_4$ gives an isomorphic triangulation. 
We orient the edges of all $P_i$ such that the boundary of every disk consists of a path from $a$ to $b$ and a path from $b$ to $a$. 
By the result mentioned above (\cite[Proposition 3]{T1}) $\Gamma_{p_1,p_2,p_3,p_4}$ is $z$-homogeneous for a certain $z$-orientation. 
\begin{center}
\begin{tikzpicture}[scale=0.6]
\coordinate[xshift=1cm] (A) at (30:5cm);
\coordinate (B) at (90:5cm);
\coordinate[xshift=-1cm] (C) at (150:5cm);
\coordinate[xshift=-1cm] (D) at (210:5cm);
\coordinate (E) at (270:5cm);
\coordinate[xshift=1cm] (F) at (330:5cm);

\draw[fill=black] (A) circle (4pt);
\draw[fill=black] (B) circle (4pt);
\draw[fill=black] (C) circle (4pt);
\draw[fill=black] (D) circle (4pt);
\draw[fill=black] (E) circle (4pt);
\draw[fill=black] (F) circle (4pt);

\coordinate[xshift=-0.5cm] (L) at ({-5*sqrt(3)/3.5},0);
\draw[fill=black] (L) circle (4pt);

\coordinate[xshift=0.3cm] (R1) at ({5*sqrt(3)/4},{5*sqrt(3)/3.5-1.2});
\coordinate[xshift=0.3cm] (R2) at ({5*sqrt(3)/4},{-5*sqrt(3)/3.5+1.2});
\draw[fill=black] (R1) circle (3pt);
\draw[fill=black] (R2) circle (3pt);

\coordinate (C1) at (0,{3.33/2});
\coordinate (C2) at (0,{-3.33/2});
\draw[fill=black] (C1) circle (4pt);
\draw[fill=black] (C2) circle (4pt);

\draw [thick, decoration={markings,
mark=at position 0.52 with {\arrow[scale=2,>=stealth]{>}},
mark=at position 0.58 with {\arrow[scale=2,>=stealth]{>}}},
postaction={decorate}] (B) -- (A);
\draw [thick, decoration={markings,
mark=at position 0.51 with {\arrow[scale=2,>=stealth]{>}},
mark=at position 0.59 with {\arrow[scale=2,>=stealth]{>}}},
postaction={decorate}] (A) -- (F);
\draw [thick, decoration={markings,
mark=at position 0.52 with {\arrow[scale=2,>=stealth]{>}},
mark=at position 0.58 with {\arrow[scale=2,>=stealth]{>}}},
postaction={decorate}] (F) -- (E);

\draw [thick, decoration={markings,
mark=at position 0.52 with {\arrow[scale=2,>=stealth]{>}},
mark=at position 0.58 with {\arrow[scale=2,>=stealth]{>}}},
postaction={decorate}] (D) -- (E);
\draw [thick, decoration={markings,
mark=at position 0.51 with {\arrow[scale=2,>=stealth]{>}},
mark=at position 0.59 with {\arrow[scale=2,>=stealth]{>}}},
postaction={decorate}] (C) -- (D);
\draw [thick, decoration={markings,
mark=at position 0.52 with {\arrow[scale=2,>=stealth]{>}},
mark=at position 0.58 with {\arrow[scale=2,>=stealth]{>}}},
postaction={decorate}] (B) -- (C);

\draw [thick, decoration={markings,
mark=at position 0.5 with {\arrow[scale=2,>=stealth]{>}},
mark=at position 0.6 with {\arrow[scale=2,>=stealth]{>}}},
postaction={decorate}] (B) -- (C1);
\draw [thick, decoration={markings,
mark=at position 0.5 with {\arrow[scale=2,>=stealth]{>}},
mark=at position 0.6 with {\arrow[scale=2,>=stealth]{>}}},
postaction={decorate}] (C1) -- (C2);
\draw [thick, decoration={markings,
mark=at position 0.5 with {\arrow[scale=2,>=stealth]{>}},
mark=at position 0.6 with {\arrow[scale=2,>=stealth]{>}}},
postaction={decorate}] (C2) -- (E);

\draw [thick, decoration={markings,
mark=at position 0.52 with {\arrow[scale=2,>=stealth]{>}},
mark=at position 0.58 with {\arrow[scale=2,>=stealth]{>}}},
postaction={decorate}] (L) -- (B);
\draw [thick, decoration={markings,
mark=at position 0.52 with {\arrow[scale=2,>=stealth]{>}},
mark=at position 0.58 with {\arrow[scale=2,>=stealth]{>}}},
postaction={decorate}] (E) -- (L);

\draw [thick, decoration={markings,
mark=at position 0.51 with {\arrow[scale=2,>=stealth]{>}},
mark=at position 0.59 with {\arrow[scale=2,>=stealth]{>}}},
postaction={decorate}] (R1) -- (B);
\draw [thick, decoration={markings,
mark=at position 0.49 with {\arrow[scale=2,>=stealth]{>}},
mark=at position 0.61 with {\arrow[scale=2,>=stealth]{>}}},
postaction={decorate}] (R2) -- (R1);
\draw [thick, decoration={markings,
mark=at position 0.51 with {\arrow[scale=2,>=stealth]{>}},
mark=at position 0.59 with {\arrow[scale=2,>=stealth]{>}}},
postaction={decorate}] (E) -- (R2);

\coordinate (RC1) at (-4.45,0);
\draw[fill=black] (RC1) circle (3pt);

\draw [dashed] (RC1) -- (L);
\draw [dashed] (RC1) -- (B);
\draw [dashed] (RC1) -- (C);
\draw [dashed] (RC1) -- (D);
\draw [dashed] (RC1) -- (E);

\coordinate (RC2) at (-1.5,0);
\draw[fill=black] (RC2) circle (3pt);

\draw [dashed] (RC2) -- (B);
\draw [dashed] (RC2) -- (C1);
\draw [dashed] (RC2) -- (C2);
\draw [dashed] (RC2) -- (E);
\draw [dashed] (RC2) -- (L);

\coordinate (RC3) at (1.4,0);
\draw[fill=black] (RC3) circle (3pt);

\draw [dashed] (RC3) -- (B);
\draw [dashed] (RC3) -- (C1);
\draw [dashed] (RC3) -- (C2);
\draw [dashed] (RC3) -- (E);
\draw [dashed] (RC3) -- (R2);
\draw [dashed] (RC3) -- (R1);

\coordinate (RC4) at (4.6,0);
\draw[fill=black] (RC4) circle (3pt);

\draw [dashed] (RC4) -- (B);
\draw [dashed] (RC4) -- (E);
\draw [dashed] (RC4) -- (R2);
\draw [dashed] (RC4) -- (R1);
\draw [dashed] (RC4) -- (A);
\draw [dashed] (RC4) -- (F);

\draw[thick, line width=2pt] (B) -- (A) -- (F) -- (E) -- (R2) -- (R1) -- (B) -- (C1) -- (C2) -- (E) -- (L) -- (B) -- (C) -- (D) -- (E);

\node at (3.2,4.2) {$e_1$};
\node at (6.6,0) {$e_2$};
\node at (3.2,-4.2) {$e_3$};

\node at (-3.2,4.2) {$e_1$};
\node at (-6.6,0) {$e_2$};
\node at (-3.2,-4.2) {$e_3$};

\node at (0,5.4) {$a$};
\node at (0,-5.45) {$b$};

\node at (-6.5,1.9) {$P_4$};
\node at (6.5,1.9) {$P_4$};
\node at (-1.9,1.2) {$P_1$};
\node at (-0.55,0) {$P_2$};
\node at (3.25,0) {$P_3$};
\end{tikzpicture}
\captionof{figure}{ }
\end{center}
}\end{exmp}

\section{Main result}

We describe the connected sum of special type of two triangulated surfaces. 
Let $(\Gamma,\tau)$ be a $z$-homogeneous triangulation of the surface $M$ for a certain $z$-orientation $\tau$. 
In the graph $\Gamma_{II}$ we consider a path $P$ consisting of two edges $e_1=v_1v_2$ and $e_2=v_2v_3$, see Fig. 4. 
In what follows, any such pair will be called {\it special}. 
Denote by $F^{+}_{i}$ and $F^{-}_{i}$ the faces containing the edge $e_i$ for $i=1,2$. 
We assume that for every $\delta\in \{+,-\}$ the faces $F^{\delta}_{1}$ and $F^{\delta}_{2}$ are on the same side of the path $P$. 
We split up each $e_{i}$ in two edges $e^{+}_{i}$ and $e^{-}_{i}$ (whose direction is coincident with the direction of $e_i$) such that the face $F^{\delta}_{i}$ contains the edge $e^{\delta}_{i}$. 
The vertex $v_2$ is splitted in two vertices $v_2^{+}$ and $v_2^{-}$ such that $e^{\delta}_{1}=v_1v_2^{\delta}$ and $e^{\delta}_{2}=v_2^{\delta}v_3$ for $\delta\in \{+,-\}$ (see Fig. 4). 
We obtain a new graph $N_P(\Gamma)$ embedded in $M$ whose faces are all the faces of $\Gamma$ and a new $4$-gonal face $F_P$. 
\begin{center}
\begin{tikzpicture}[scale=0.6]

\begin{scope}
\coordinate (A) at (0,0);
\coordinate (B) at (5cm,0);
\coordinate (C) at (10cm,0);

\draw[fill=black] (A) circle (4pt);
\draw[fill=black] (B) circle (4pt);
\draw[fill=black] (C) circle (4pt);

\draw[thick, line width=2pt] (A) -- (B);
\draw [thick, decoration={markings,
mark=at position 0.52 with {\arrow[scale=2,>=stealth]{>}},
mark=at position 0.58 with {\arrow[scale=2,>=stealth]{>}}},
postaction={decorate}] (A) -- (B);

\draw[thick, line width=2pt] (B) -- (C);
\draw [thick, decoration={markings,
mark=at position 0.52 with {\arrow[scale=2,>=stealth]{>}},
mark=at position 0.58 with {\arrow[scale=2,>=stealth]{>}}},
postaction={decorate}] (B) -- (C);

\node at (2.5cm,1.4cm) {$F^+_1$};
\node at (7.5cm,1.4cm) {$F^+_2$};
\node at (2.5cm,-1.4cm) {$F^-_1$};
\node at (7.5cm,-1.4cm) {$F^-_2$};

\node at (2.5cm,-0.6cm) {$e_1$};
\node at (7.5cm,-0.6cm) {$e_2$};

\node at (0,-0.6cm) {$v_1$};
\node at (5cm,-0.6cm) {$v_2$};
\node at (10cm,-0.6cm) {$v_3$};
\end{scope}

\begin{scope}[yshift=-8cm]
\coordinate (A) at (0,0);
\coordinate (B+) at (5cm,1.75cm);
\coordinate (B-) at (5cm,-1.75cm);
\coordinate (C) at (10cm,0);

\draw[fill=black] (A) circle (4pt);
\draw[fill=black] (B+) circle (4pt);
\draw[fill=black] (B-) circle (4pt);
\draw[fill=black] (C) circle (4pt);

\draw[thick, line width=2pt] (A) -- (B+);
\draw [thick, decoration={markings,
mark=at position 0.52 with {\arrow[scale=2,>=stealth]{>}},
mark=at position 0.58 with {\arrow[scale=2,>=stealth]{>}}},
postaction={decorate}] (A) -- (B+);

\draw[thick, line width=2pt] (A) -- (B-);
\draw [thick, decoration={markings,
mark=at position 0.52 with {\arrow[scale=2,>=stealth]{>}},
mark=at position 0.58 with {\arrow[scale=2,>=stealth]{>}}},
postaction={decorate}] (A) -- (B-);

\draw[thick, line width=2pt] (B+) -- (C);
\draw [thick, decoration={markings,
mark=at position 0.52 with {\arrow[scale=2,>=stealth]{>}},
mark=at position 0.58 with {\arrow[scale=2,>=stealth]{>}}},
postaction={decorate}] (B+) -- (C);

\draw[thick, line width=2pt] (B-) -- (C);
\draw [thick, decoration={markings,
mark=at position 0.52 with {\arrow[scale=2,>=stealth]{>}},
mark=at position 0.58 with {\arrow[scale=2,>=stealth]{>}}},
postaction={decorate}] (B-) -- (C);

\node at (2.3cm,2.5cm) {$F^+_1$};
\node at (7.7cm,2.5cm) {$F^+_2$};
\node at (2.3cm,-2.5cm) {$F^-_1$};
\node at (7.7cm,-2.5cm) {$F^-_2$};

\node at (2.1cm,1.4cm) {$e^+_1$};
\node at (7.9cm,1.4cm) {$e^+_2$};
\node at (2.1cm,-1.4cm) {$e^-_1$};
\node at (7.9cm,-1.4cm) {$e^-_2$};

\node at (0,-0.6cm) {$v_1$};
\node at (5cm,2.3cm) {$v^+_2$};
\node at (5cm,-2.3cm) {$v^-_2$};
\node at (10cm,-0.6cm) {$v_3$};

\node at (5cm,0) {$F_P$};
\end{scope}

\draw[thick, line width=0.75pt] (5cm,-1.5cm) -- (5cm,-4.8cm);
\draw [thick, decoration={markings,
mark=at position 1 with {\arrow[scale=2,>=stealth]{>}}},
postaction={decorate}] (5cm,-1.5cm) -- (5cm,-5cm);

\end{tikzpicture}
\captionof{figure}{ }
\end{center}
Let $(\Gamma', \tau')$ be another $z$-homogeneous triangulation of a surface $M'$ and let $\Gamma'_{II}$ be the subgraph of $\Gamma'$ formed by all edges of type II and their vertices. 
We choose a special pair $P'$ in $\Gamma'_{II}$ and, as above, construct the graph $N_{P'}(\Gamma')$. 
We need the following additional assumption: 
\begin{enumerate}
\item[$(*)$] for at least one of the pairs $P,P'$ there is no edge in $\Gamma$ or $\Gamma'$ (respectively) which connects the endpoints. 
\end{enumerate}
In other words, at least one of the pairs is not a part in a cycle of length $3$. 

It clear that the $z$-orientations $\tau$ and $\tau'$ determine the orientation on edges of $F_P$ and $F_{P'}$ (respectively). 
We denote by $\omega(F_P)$ and $\omega(F_{P'})$ the sets of such oriented edges for $F_P$ and $F_{P'}$, respectively. 
We say that a homeomorphism $g:\partial F_P\to\partial F_{P'}$ is {\it special} if it transfers the vertices of $F_P$ to the vertices of $F_{P'}$ and the edges from $\omega(F_P)$ to the edges from $\omega(F_{P'})$. 

Now, we remove the interiors of $F_P$ and $F_{P'}$ from $M$ and $M'$ (respectively). 
Using any special homeomorphism $g:\partial F_P\to\partial F_{P'}$, we glue the boundaries $\partial F_P$ and $\partial F_{P'}$. 
We obtain a triangulation of $M\#M'$ and denote it by $\Gamma\#_{g}\Gamma'$. 
It must be pointed out that the assumption $(*)$ guarantees that there are no multiple edges in $\Gamma\#_{g}\Gamma'$. 
Since the special homeomorphism $g$ preserves the orientation, the connected sum $\Gamma\#_{g}\Gamma'$ admits a $z$-orientation such that each edge in $\Gamma\#_{g}\Gamma'$ has the same type as it has in $(\Gamma, \tau)$ or $(\Gamma', \tau')$. 
For this $z$-orientation our connected sum is $z$-homogeneous. 

The above construction can be applied several times. 
Let $P_1,\dots, P_n$ be special pairs in $(\Gamma,\tau)$ and let $P'_1, \dots, P'_n$ be special pairs in spherical $z$-homogeneous triangulations $(\Gamma_1,\tau_1),\dots,(\Gamma_n,\tau_n)$. 
For any special homeomorphism $g_i:\partial F_{P_i}\to\partial F_{P'_i}$ with $i\in\{1,\dots,n\}$ the connected sum
\begin{equation}\label{eq-1}
(((\Gamma \#_{g_{1}} \Gamma_{1})\#_{g_{2}} \Gamma_{2})\dots)\#_{g_{n}}\Gamma_{n}
\end{equation}
is a $z$-homogeneous triangulation of $M$ (for a certain $z$-orientation). 

\begin{theorem}\label{th3}
For any $z$-homogeneous triangulation $(\Gamma,\tau)$ there exist spherical $z$-homogeneous triangulations $(\Gamma_1,\tau_1),\dots,(\Gamma_n,\tau_n)$ such that the $z$-homogeneous connected sum \eqref{eq-1} is $z$-knotted for some special pairs $P_i$ in $\Gamma$, $P'_i$ in $\Gamma_i$ and special homeomorphisms $g_i:\partial F_{P_i}\to\partial F_{P'_i}$ with $i\in\{1,\dots,n\}$.
\end{theorem}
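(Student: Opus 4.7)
The plan is to proceed by induction on the number $k$ of zigzags in the $z$-orientation $\tau$. The case $k=1$ is trivial since $(\Gamma,\tau)$ is already $z$-knotted. For $k\ge 2$, I aim to construct a single connected sum with a spherical $z$-homogeneous triangulation that reduces $k$ by one; iterating at most $k-1$ times produces the required chain \eqref{eq-1}, which remains a triangulation of $M$ because each summand is a sphere.

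The key local tool is the $z$-monodromy at a special pair $P=(e_1,e_2)$, which encodes the combinatorial data of how zigzags of $(\Gamma,\tau)$ enter and leave the open $4$-gonal region $F_P$. Explicitly, once $\mathrm{int}\,F_P$ is removed, each zigzag of $\tau$ decomposes into arcs whose endpoints lie on the oriented boundary $\omega(F_P)$, and so determines a pairing of these oriented edges. The same applies to $(\Gamma',\tau',P')$. A special homeomorphism $g:\partial F_P\to\partial F_{P'}$ identifies the two boundaries, and the zigzags of the connected sum $\Gamma\#_g\Gamma'$ correspond exactly to the cycles obtained by superposing the two pairings through $g$. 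Thus, to decrease the number of zigzags by one it suffices to choose $P$, $P'$ and $g$ so that this superposition merges two cycles of the $\Gamma$-side pairing into a single cycle.

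To do so, I first select a special pair $P$ in $\Gamma_{II}$ through which two distinct zigzags of $\tau$ pass. Such a $P$ exists whenever $k\ge 2$: by the correspondence recalled after Example~\ref{ex2} the graph $\Gamma_{II}$ is a connected Eulerian digraph and every zigzag of $\tau$ contains edges of type II, so two distinct zigzags must meet at some vertex of $\Gamma_{II}$, and an incident special pair can then be found. I then appeal to the classification of the $13$ realisable classes of $z$-monodromies at special pairs. For each class that may arise I exhibit an explicit spherical $z$-homogeneous triangulation $(\Gamma',\tau')$, chosen from the bipyramids $BP_n$ of Examples~\ref{ex1} and~\ref{ex1b} and the triangulations $\Gamma_{p_1,p_2,p_3,p_4}$ of Example~\ref{ex2}, together with a special pair $P'$ and a special homeomorphism $g$ whose induced pairing merges the desired two cycles. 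The condition $(*)$ is easy to maintain because in these spherical families one can always pick $P'$ whose endpoints are not joined by a further edge of $\Gamma'$.

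The main obstacle is the case analysis over the $13$ monodromy classes: for each class that can arise at a special pair supporting two distinct zigzags of $\tau$, one must exhibit a spherical partner whose monodromy has a cycle structure complementary to that of $P$ in the sense required by the superposition described above. This is a finite combinatorial check, but it is precisely where the richness of the spherical examples, and the realisability of all $13$ monodromy classes (established in part by computer, as noted in the introduction), is essential. A secondary technical point is to verify that the local modification at $P$ does not accidentally change the types of edges outside $F_P$ or destroy $z$-homogeneity; this follows from the fact that the connected sum construction preserves the type of every surviving edge and that the new edges of $F_P$ inherit orientations compatible with the type~II condition, as already recorded in the preamble to the theorem.
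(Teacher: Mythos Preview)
Your overall strategy---induction on the number of zigzags, locating a special pair $P$ visited by more than one zigzag of $\tau$, gluing a spherical $z$-homogeneous piece along $P$, and finishing with a case analysis over the $z$-monodromy classes---is exactly the approach the paper takes. However, there is a genuine gap in your counting argument.

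You assert that ``the zigzags of the connected sum $\Gamma\#_g\Gamma'$ correspond exactly to the cycles obtained by superposing the two pairings through $g$'', and from this conclude that it suffices to merge two $\Gamma$-side cycles into one. This is not correct: only those zigzags of the connected sum that pass through the glued $4$-gon correspond to cycles of the superposed permutation $g^{-1}M_{P'}gM_P$. Any zigzag of $(\Gamma',\tau')$ that does \emph{not} pass through $P'$ survives unchanged in $\Gamma\#_g\Gamma'$ and is added to the total count. So even if your superposition collapses two $\Gamma$-cycles to one, you may simultaneously introduce new zigzags coming from $\Gamma'$, and the total need not decrease. The paper resolves this by requiring $P'$ to be \emph{essential} in $\Gamma'$ (every zigzag of $\tau'$ passes through $P'$), and then arranging that $g^{-1}M_{P'}gM_P$ is a single $4$-cycle; under these hypotheses the number of zigzags strictly drops. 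The essentiality of $P'$ is the missing ingredient in your outline, and it constrains which spherical partners and which special pairs within them you are allowed to use in the case analysis.

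A smaller point: your argument for the existence of a special pair $P$ carrying at least two zigzags (``two distinct zigzags must meet at some vertex of $\Gamma_{II}$, and an incident special pair can then be found'') is too quick. Meeting at a vertex of $\Gamma_{II}$ does not automatically give two consecutive, consistently directed type~II edges visited by distinct zigzags. The paper instead argues via a face $F$ with two zigzags on its boundary: either the type~II edge of $F$ already lies in two zigzags, or one uses $z$-homogeneity to locate an adjacent type~II edge $e'$ in the second zigzag, and $e,e'$ form the required special pair.
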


\section{$Z$-monodromy}

As in the previous sections, we consider a closed $2$-cell embedding of a connected simple finite graph without vertices of degree $2$. 
Let $F$ be a $k$-gonal face of this graph. 
Suppose that $x_1,\dots,x_k$ are the consecutive vertices of $F$ and consider the set of all oriented edges
$$\Omega(F)=\{x_1x_2,\dots,x_kx_1,x_kx_1,\dots,x_2x_1\}.$$
If $e=xy$, then we write $-e$ for the edge directed from $y$ to $x$.
Let $D_F$ be the permutation on $\Omega(F)$ which transfers $xx'$ to $x'x''$ for any three consecutive vertices $x,x',x''$ belonging to $F$.
So, $D_F$ is the composition of two distinct commuting $k$-cycles and any of these cycles can be identified with one of the orientations on the boundary of $F$. 

For any $e\in \Omega(F)$ we take $e_{0}\in \Omega(F)$ such that $D_{F}(e_{0})=e$ and the zigzag containing the sequence $e_{0},e$. 
We define $M_{F}(e)$ as the first element of $\Omega(F)$ contained in this zigzag after $e$. 
The permutation $M_F$ of the set $\Omega(F)$ is called a {\it $z$-monodromy} of the face $F$. 
A full description of all $z$-monodromies for triangle faces is given in \cite{PT2}. 
Now, using the concept of $z$-monodromy for faces, we define the $z$-monodromy for a pair of consecutive edges of type II (in a $z$-homogeneous triangulation) oriented in the same direction. 

As before, suppose that $(\Gamma,\tau)$ is a $z$-homogeneous triangulation and $P$ is a special pair in $\Gamma_{II}$. 
For a $4$-gonal face $F_P$ in $N_P(\Gamma)$ we consider the set of all oriented edges
$$\Omega(F_P)=\{e^{+}_{1},e^{+}_{2},e^{-}_{1},e^{-}_{2},-e^{+}_{1},-e^{+}_{2},-e^{-}_{1},-e^{-}_{2}\}$$ 
and the associated $z$-monodromy $M_{F_P}$. 
The restriction of $M_{F_P}$ to the set 
$$\omega{(F_P)}=\{e^{+}_{1},e^{+}_{2},e^{-}_{1},e^{-}_{2}\}$$ 
will be called the {\it $z$-monodromy} of $P$ and denoted by $M_P$. 

We show that $M_P$ is a permutation on $\omega{(F_P)}$. 
For every $e^{\delta}_{i}\in\omega{(F_P)}$ we consider the following part of a zigzag in $N_P(\Gamma)$
$$[e^{\delta}_{i}, D_{F^{\delta}_{i}}(e^{\delta}_{i}),\dots,M_P(e^{\delta}_{i})].$$ 
It corresponds to a part of a zigzag from $\tau$ which is
$$[e_i, D_{F^{\delta}_{i}}(e_{i}),\dots,e_j],$$ 
where $j\in\{1,2\}$. 
This implies that $M_P(e^{\delta}_{i})=e^{\gamma}_j$ for $\gamma\in\{+,-\}$ and, consequently, $M_P(e^{\delta}_{i})\in\omega(F_P)$. 
So, the $z$-monodromy $M_P$ can be identified with a certain permutation from the symmetric group $S_4$. 

From this moment, for simplicity, we write $1, 2$ for $e^{+}_{1},e^{+}_{2}$ and $3,4$ for $e^{-}_{1},e^{-}_{2}$ respectively. 
Since $|S_4|=24$, there are $24$ possibilities for $M_P$ (at this moment we do not discuss their realization). 
We decompose these possibilities on the following $13$ classes: 
\begin{enumerate}
\item[$(K_0)$] $id$,

\item[$(K_1)$] $(1234)$,
\item[$(K_2)$] $(13)(24)$,
\item[$(K_3)$] $(1432)$,
\item[$(K_4)$] $(14)(23)$,

\item[$(K_5)$] $(12)(34)$,
\item[$(K_6)$] $(24), (13)$,
\item[$(K_7)$] $(34), (12)$,
\item[$(K_8)$] $(23), (14)$,

\item[$(K_9)$] $(1324), (1423)$,
\item[$(K_{10})$] $(1243), (1342)$,
\item[$(K_{11})$] $(234), (123), (124), (134)$, 
\item[$(K_{12})$] $(243), (132), (142), (143)$.
\end{enumerate}
Now, we explain why the elements of $S_4$ are separated in this way. 
We will use the following two commuting involutions:
$$s=(13)(24),\hspace{1cm}t=(12)(34).$$ 
Two permutations $p_1, p_2$ belong to the same class $K_i$, $i\geq6$, if one of the following possibilities is realized: 
$$p_2=sp_1s\hspace{0.5cm}\text{or}\hspace{0.5cm}p_2=up^{-1}_1u,$$ 
where $u\in\{t,st\}$ (see Tab. 1 in Appendix for a verification). 
The permutation $s$ corresponds to the transposing of $+$ and $-$ for the edges $e^{\delta}_i$. 
If we replace the $z$-orientation $\tau$ by $\tau^{-1}$, then a permutation $p$ corresponding to the $z$-monodromy is changed by $tp^{-1}t$. 

The following examples show that each of the mentioned above $13$ classes of $z$-monodromy is realized. 
\begin{exmp}\label{ex3}{\rm
Suppose that $\Gamma$ is the bypiramid $BP_n$ (with the $z$-orientation from Examples \ref{ex1} and \ref{ex1b}). 
Let $P$ be a special pair formed by the directed edges $v_1v_2$ and $v_2v_3$. 
We denote these edges by $1$ and $2$, respctively, if they are considered as the edges of the faces containing $a$ in $N_P(BP_n)$. 
If these edges are considered as the edges on the faces containing $b$ in $N_P(BP_n)$, then we will write for them $3$ and $4$ respectively.

{\it The case when $n=2k+1$ and $k$ is odd.} The zigzag passes through $P$ as follows
$$\dots,av_1,v_1v_2,v_2b,\dots,av_2,v_2v_3,v_3b,\dots,bv_1,v_1v_2,v_2a,\dots,bv_2,v_2v_3,v_3a,\dots$$
This zigzag defines the following union of parts of zigzags in $N_P(BP_n)$: 
$$\dots,1,3,\dots,2,4,\dots,3,1,\dots,4,2,\dots$$
and the $z$-monodromy is 
$$M_P=(1432)$$
from the class $K_3$.

{\it The case when $n=2k+1$ and $k$ is even.} The zigzag in $BP_n$ is 
$$\dots,av_1,v_1v_2,v_2b,\dots,bv_2,v_2v_3,v_3a,\dots,bv_1,v_1v_2,v_2a,\dots,av_2,v_2v_3,v_3b,\dots$$
and the corresponding union of parts of zigzags in $N_P(BP_n)$ is
$$\dots,1,3,\dots,4,2,\dots,3,1,\dots,2,4,\dots$$
which means that the $z$-monodromy is
$$M_P=(1234)$$
from the class $K_1$.

{\it The case when $n=2k$ and $k$ is odd.} The edges of $P$ occur in zigzags of $BP_n$ as follows: 
$$\dots,av_1,v_1v_2,v_2b,\dots,bv_1,v_1v_2,v_2a,\dots$$
and
$$\dots,av_2,v_2v_3,v_3b,\dots,bv_2,v_2v_3,v_3a,\dots;$$
the corresponding unions of parts of zigzags in $N_P(BP_n)$ are
$$\dots,1,3,\dots,3,1,\dots\hspace{0.5cm}\text{ and }\hspace{0.5cm}\dots,2,4,\dots,4,2,\dots$$ 
which implies that $M_P$ is the identity (class $K_0$). 

{\it The case when $n=2k$ and $k$ is even.} The four zigzags of $BP_n$ pass through the edges of $P$ as follows 
$$\dots,av_1,v_1v_2,v_2b,\dots,$$ 
$$\dots,bv_1,v_1v_2,v_2a,\dots,$$ 
$$\dots,av_2,v_2v_3,v_3b,\dots,$$ 
$$\dots,bv_2,v_2v_3,v_3a,\dots$$ 
and the corresponding parts of zigzags in $N_P(BP_n)$ are 
$$\dots,1,3,\dots,\hspace{0.6cm}\dots,3,1,\dots,\hspace{0.6cm}\dots,2,4,\dots,\hspace{0.6cm}\dots,4,2,\dots$$ 
and the $z$-monodromy 
$$M_P=(13)(24)$$ 
is from the class $K_2$. 

Any special pair of $BP_n$ can be transferred to any another special pair by an automorphism and for all special pairs the $z$-monodromies belong to the same class.  
}\end{exmp}

\begin{exmp}\label{ex4}{\rm 
Consder the $z$-homogeneous triangulation $\Gamma_{2,3,4,5}$ (with the corresponding $z$-orientation) as in Example \ref{ex2} with the paths 
$$P_1=\{a,v_0,b\},$$ 
$$P_2=\{b,v_1,v_2,a\},$$ 
$$P_3=\{a,v_3,v_4,v_5,b\},$$ 
$$P_4=\{b,v_6,v_7,v_8,v_9,a\}.$$ 
Denote by $v_{ij}$ the vertex in the interior whose boundary is $P_i\cup P_j$, $i,j\in\{1,2,3,4\}$ and $i<j$. 
The $z$-orientation of $\Gamma_{2,3,4,5}$ consists of the follownig two zigzags (we present them as cyclic sequences of vertices):
$$v_{14},a,v_0,v_{12},b,v_1,v_{23},v_2,a,v_{12},v_0,b,v_{14},v_6,v_7,v_{34},v_8,v_9$$
and
$$v_{14},v_0,b,v_{12},v_1,v_2,v_{23},a,v_3,v_{34},v_4,v_5,v_{23},b,v_1,v_{12},v_2,a,v_{23},v_3,v_4,$$
$$v_{34},v_5,b,v_{23},v_1,v_2,v_{12},a,v_0,v_{14},b,v_6,v_{34},v_7,v_8,v_{14},v_9,a,v_{34},v_3,v_4,$$
$$v_{23},v_5,b,v_{34},v_6,v_7,v_{14},v_8,v_9,v_{34},a,v_3,v_{23},v_4,v_5,v_{34},b,v_6,v_{14},v_7,v_8,v_{34},v_9,a.$$ 

$(1).$ If $P$ is the special pair $v_0b,bv_1$, then the zigzags 
$$\dots,v_{12}b,bv_1,v_1v_{23},\dots,v_{12}v_0,v_0b,bv_{14},\dots$$
and
$$\dots,v_{14}v_0,v_0b,bv_{12},\dots,v_{23}b,bv_1,v_1v_{12},\dots$$
pass through the edges of $P$. 
In $N_P(\Gamma_{2,3,4,5})$, we denote by $1,2$ the edges corresponding to $v_0b,bv_1$ (respectively) which are in the faces containing $v_{12}$. 
We write $3,4$ for the edges corresponding to $v_0b,bv_1$ which are in the faces containing $v_{14}$ and $v_{23}$, respectively. 
The associated unions of parts of zigzags in $N_P(\Gamma_{2,3,4,5})$ are 
$$\dots,2,4,\dots,1,3,\dots\hspace{0.5cm}\text{and}\hspace{0.5cm}\dots,3,1,\dots,4,2,\dots$$ 
which implies that the $z$-monodromy
$$M_P=(14)(23)$$
is from the class $K_{4}$.

$(2).$ If $P$ is the special pair $av_0,v_0b$, then the zigzags 
$$\dots,v_{14}a,av_0,v_0v_{12},\dots,v_{12}v_0,v_0b,bv_{14},\dots$$
and
$$\dots,v_{14}v_0,v_0b,bv_{12},\dots,v_{12}a,av_0,v_0v_{14},\dots$$
pass through the edges of $P$. 
In $N_P(\Gamma_{2,3,4,5})$, we denote by $1,2$ the edges corresponding to $av_0,v_0b$ (respectively) which are in the faces containing $v_{12}$. 
We write $3,4$ for the edges related to $av_0,v_0b$ which are in the faces containing $v_{14}$, respectively. 
The corresponding unions of parts of zigzags in $N_P(\Gamma_{2,3,4,5})$ are 
$$\dots,3,1,\dots,2,4,\dots\hspace{0.5cm}\text{and}\hspace{0.5cm}\dots,4,2,\dots,1,3,\dots$$ 
which implies that the $z$-monodromy
$$M_P=(12)(34)$$
is from the class $K_{5}$.

$(3).$ If $P$ is the special pair $v_3v_4,v_4v_5$, then the zigzag 
$$\dots,v_{34}v_4,v_4v_5,v_5v_{23},\dots,v_{23}v_3,v_3v_4,v_4v_{34},\dots,v_{34}v_3,v_3v_4,v_4v_{23},\dots,v_{23}v_4,v_4v_5,v_5v_{34},\dots$$
passes through the edges of $P$. 
In $N_P(\Gamma_{2,3,4,5})$, we denote by $1,2$ the edges corresponding to $v_3v_4,v_4v_5$ (respectively) which are in the faces containing $v_{23}$. 
We write $3,4$ for the edges associated to $v_3v_4,v_4v_5$ which are in the faces containing $v_{34}$, respectively. 
The related union of parts of zigzags in $N_P(\Gamma_{2,3,4,5})$ is 
$$\dots, 4,2, \dots, 1,3, \dots, 3,1, \dots, 2,4,\dots$$
and the $z$-monodromy
$$M_P=(12)$$
is from the class $K_{7}$.

$(4).$ If $P$ is the special pair $av_3,v_3v_4$, then the zigzag 
$$\dots,v_{23}a,av_3,v_3v_{34},\dots,v_{23}v_3,v_3v_4,v_4v_{34},\dots,v_{34}v_3,v_3v_4,v_4v_{23},\dots,v_{34}a,av_3,v_3v_{23},\dots$$ 
passes through the edges of $P$. 
In $N_P(\Gamma_{2,3,4,5})$, we denote by $1,2$ the edges corresponding to $av_3,v_3v_4$ (respectively) which are in the faces containing $v_{23}$. 
We write $3,4$ for the edges corresponding to $av_3,v_3v_4$ which are in the faces containing $v_{34}$, respectively. 
The associated union of parts of zigzags in $N_P(\Gamma_{2,3,4,5})$ is 
$$\dots, 1,3,\dots,2,4,\dots,4,2,\dots, 3,1,\dots$$
and the $z$-monodromy
$$M_P=(23)$$
is from the class $K_{8}$.

$(5).$ If $P$ is the special pair $bv_6,v_6v_7$, then the zigzags 
$$\dots,v_{14}v_6,v_6v_7,v_7v_{34}\dots$$
and
$$\dots,v_{14}b,bv_6,v_6v_{34},\dots,v_{34}v_6,v_6v_7,v_7v_{14},\dots,v_{34}b,bv_6,v_6v_{14},\dots$$ 
pass through the edges of $P$. 
In $N_P(\Gamma_{2,3,4,5})$, we denote by $1,2$ the edges corresponding to $bv_6,v_6v_7$ (respectively) which are in the faces containing $v_{14}$. 
We write $3,4$ for the edges related to $bv_6,v_6v_7$ which are in the faces containing $v_{34}$, respectively. 
The corresponding unions of parts of zigzags in $N_P(\Gamma_{2,3,4,5})$ are 
$$\dots,2,4,\dots\hspace{0.5cm}\text{and}\hspace{0.5cm}\dots,1,3,\dots,4,2,\dots,3,1,\dots$$ 
which implies that the $z$-monodromy
$$M_P=(234)$$
is from the class $K_{11}$.

$(6).$ If $P$ is the special pair $bv_1,v_1v_2$, then the zigzags 
$$\dots,v_{12}b,bv_1,v_1v_{23},\dots$$
and
$$\dots,v_{12}v_1,v_1v_2,v_2v_{23},\dots,v_{23}b,bv_1,v_1v_{12},\dots,v_{23}v_1,v_1v_2,v_2v_{12},\dots$$
pass through the edges of $P$. 
In $N_P(\Gamma_{2,3,4,5})$, we denote by $1,2$ the edges corresponding to $bv_1,v_1v_2$ (respectively) which are in the faces containing $v_{12}$. 
We write $3,4$ for the edges corresponding to $bv_1,v_1v_2$ which are in the faces containing $v_{23}$, respectively. 
The associated unions of parts of zigzags in $N_P(\Gamma_{2,3,4,5})$ are 
$$\dots,1,3,\dots\hspace{0.5cm}\text{and}\hspace{0.5cm}\dots,2,4,\dots,3,1,\dots,4,2,\dots$$ 
which implies that the $z$-monodromy 
$$M_P=(143)$$
is from the class $K_{12}$.
}\end{exmp}

\begin{exmp}\label{ex5}{\rm
Now, we consder the $z$-homogeneous triangulation $\Gamma_{2,4,3,4}$ (with the corresponding $z$-orientation) from Example \ref{ex2} with the paths 
$$P_1=\{a,v_0,b\},$$
$$P_2=\{b,v_1,v_2,v_3,a\},$$
$$P_3=\{a,v_4,v_5,b\},$$
$$P_4=\{b,v_6,v_7,v_8,a\}.$$
As in the previous example, $v_{ij}$ is the vertex in the interior whose boundary is $P_i\cup P_j$, $i,j\in\{1,2,3,4\}$ and $i<j$. 
The $z$-orientation of $\Gamma_{2,4,3,4}$ consists of the follownig three zigzags:
$$v_{14},a,v_0,v_{12},b,v_1,v_{23},v_2,v_3,v_{12},a,v_0,v_{14},b,v_6,v_{34},v_7,v_8$$
and
$$v_{14},v_0,b,v_{12},v_1,v_2,v_{23},v_3,a,v_{12},v_0,b,v_{14},v_6,v_7,v_{34},v_8,a$$
and
$$v_{23},b,v_1,v_{12},v_2,v_3,v_{23},a,v_4,v_{34},v_5,b,v_{23},v_1,v_2,v_{12},v_3,a,v_{23},v_4,v_5,$$
$$v_{34},b,v_6,v_{14},v_7,v_8,v_{34},a,v_4,v_{23},v_5,b,v_{34},v_6,v_7,v_{14},v_8,a,v_{34},v_4,v_5.$$

$(1).$ If $P$ is the special pair $v_0b,bv_1$, then the zigzags 
$$\dots,v_{12}b,bv_1,v_1v_{23},\dots$$
and
$$\dots,v_{14}v_0,v_0b,bv_{12},\dots,v_{12}v_0,v_0b,bv_{14},\dots$$
and
$$\dots,v_{23}b,bv_1,v_1v_{12},\dots$$
pass through the edges of $P$. 
In $N_P(\Gamma_{2,4,3,4})$, we denote by $1,2$ the edges corresponding to $v_0b,bv_1$ (respectively) which are in the faces containing $v_{12}$. 
We write $3,4$ for the edges corresponding to $v_0b,bv_1$ which are in the faces containing $v_{14}$ and $v_{23}$, respectively. 
The associated unions of parts of zigzags in $N_P(\Gamma_{2,4,3,4})$ are 
$$\dots,2,4,\dots,\hspace{0.6cm}\dots,3,1,\dots,1,3,\dots,\hspace{0.6cm}\dots,4,2,\dots$$ 
and the $z$-monodromy
$$M_P=(24)$$
is from the class $K_{6}$.

$(2).$ If $P$ is the special pair $bv_1,v_1v_2$ then the zigzags 
$$\dots,v_{12}b,bv_1,v_1v_{23},\dots$$ 
and 
$$\dots,v_{12}v_1,v_1v_2,v_2v_{23},\dots$$
and
$$\dots,v_{23}b,bv_1,v_1v_{12},\dots,v_{23}v_1,v_1v_2,v_2v_{12},\dots$$
pass through the edges of $P$. 
In $N_P(\Gamma_{2,4,3,4})$, we denote by $1,2$ the edges corresponding to $bv_1,v_1v_2$ (respectively) which are in the faces containing $v_{12}$. 
We write $3,4$ for the edges related to $bv_1,v_1v_2$ which are in the faces containing $v_{23}$, respectively. 
The corresponding unions of parts of zigzags in $N_P(\Gamma_{2,4,3,4})$ are 
$$\dots,1,3,\dots,\hspace{0.6cm}\dots,2,4,\dots,\hspace{0.6cm}\dots,3,1,\dots,4,2,\dots$$ 
which implies that the $z$-monodromy
$$M_P=(1423)$$
is from the class $K_{9}$.

$(3).$ If $P$ is the special pair $v_1v_2,v_2v_3$ then the zigzags 
$$\dots,v_{23}v_2,v_2v_3,v_3v_{12},\dots$$
and
$$\dots,v_{12}v_1,v_1v_2,v_2v_{23},\dots$$
and
$$\dots,v_{12}v_2,v_2v_3,v_3v_{23},\dots,v_{23}v_1,v_1v_2,v_2v_{12},\dots$$
pass through the edges of $P$. 
In $N_P(\Gamma_{2,4,3,4})$, we denote by $1,2$ the edges corresponding to $v_1v_2,v_2v_3$ (respectively) which are in the faces containing $v_{12}$. 
We write $3,4$ for the edges associated to $v_1v_2,v_2v_3$ which are in the faces containing $v_{23}$, respectively. 
The related unions of parts of zigzags in $N_P(\Gamma_{2,4,3,4})$ are 
$$\dots,4,2,\dots,\hspace{0.6cm}\dots,1,3,\dots,\hspace{0.6cm}\dots,2,4,\dots,3,1,\dots$$ 
and the $z$-monodromy
$$M_P=(1243)$$ 
is from the class $K_{10}$.
}\end{exmp}

\section{Proof of Theorem 1}
Let $\Gamma$ and $\Gamma'$ be $z$-homogeneous triangulations (with fixed $z$-orientation). 
Let also $P$ and $P'$ be special pairs in $\Gamma$ and $\Gamma'$, respectively. 
As above, we write $F_P, F_{P'}$ for the faces in $N_P(\Gamma), N_{P'}(\Gamma')$ obtained from $P$ and $P'$, respectively.

\begin{lemma}\label{lem2}
The number of zigzags from the $z$-orientation of $\Gamma$ passing through the edges of $P$ is equal to the number of cycles in the permutation $sM_P$ where $s=(13)(24)$. 
\end{lemma}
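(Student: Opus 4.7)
The plan is to exhibit a natural labeling of the crossings of $P$ by zigzags of $\tau$ using the four elements of $\omega(F_P)$, and then to identify the corresponding successor permutation as a conjugate of $sM_P$.

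First, I will check that the four crossings of $P$ are in canonical bijection with $\omega(F_P)$. Since $e_i$ is of type II, it is traversed by zigzags of $\tau$ exactly twice, both in the positive direction of $e_i$. These two traversals must approach $e_i$ from opposite sides: if both approached from $F_i^{\delta}$, each would be preceded in its zigzag by the unique edge of the triangle $F_i^{\delta}$ (other than $e_i$) sharing the starting vertex of $e_i$, so the two traversals would be determined by the same pair of consecutive edges and thus identical---contradicting the uniqueness of zigzags through a given consecutive pair. Labeling each crossing by $e_i^{\delta}\in\omega(F_P)$ according to the face $F_i^{\delta}$ from which it is approached gives the desired bijection, and the crossings belonging to a single zigzag of $\tau$ form a cyclic subsequence of these labels.

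Second, I will identify the successor relation inside a zigzag of $\tau$. Consider a crossing of $Z\in\tau$ labeled $\ell = e_i^{\delta}$. Just after this crossing $Z$ lies in $F_i^{-\delta}$ with the edge $e_i$ just used. Under the canonical identification of $\Gamma\setminus P$ with $N_P(\Gamma)\setminus F_P$, this state coincides with that of a zigzag in $N_P(\Gamma)$ just after exiting $F_P$ through $e_i^{-\delta}$ (positively oriented): same face, same previous edge via $e_i\leftrightarrow e_i^{-\delta}$, and same shared vertex via $v_2\leftrightarrow v_2^{-\delta}$. Consequently the two continuations agree edge-for-edge until the next encounter with $P$ respectively $F_P$. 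By definition this next encounter in $N_P(\Gamma)$ is at $M_P(e_i^{-\delta}) = e_j^{\gamma}\in\omega(F_P)$, entered positively by the same side-of-approach argument; translating back to $\Gamma$, $Z$ next crosses $e_j$ from $F_j^{\gamma}$, so its next crossing is labeled $e_j^{\gamma} = M_P(s(\ell))$. Thus the successor permutation on $\omega(F_P)$ is $\ell \mapsto M_P(s(\ell))$, which I will denote $M_P s$.

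Third, $M_P s$ and $sM_P$ are conjugate in $S_4$---explicitly, $s\,(M_P s)\, s^{-1} = sM_P$ using $s^2 = \mathrm{id}$---hence they share the same cycle structure. The number of zigzags of $\tau$ passing through the edges of $P$ therefore equals the number of cycles of $sM_P$. The main subtlety of the proof is the state-matching step: one must carefully track orientations and vertex identifications to confirm that a crossing of $e_i$ from side $F_i^{\delta}$ corresponds to the $N_P(\Gamma)$-exit at $e_i^{-\delta}$ with positive orientation, and that the entrance returned by $M_P$ is again positively oriented; the swap of sides appearing in this matching is precisely the source of the $s$ factor in the statement.
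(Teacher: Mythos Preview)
Your argument is correct and follows essentially the same approach as the paper: both proofs identify the traversals of $P$ with elements of $\omega(F_P)$, read off the successor map via the zigzag segments in $N_P(\Gamma)$, and use that $s$ encodes the side-switch when passing from $N_P(\Gamma)$ back to $\Gamma$. The only cosmetic difference is that you label crossings by the side of \emph{approach} (yielding successor $M_Ps$) whereas the paper labels by the side of \emph{exit} (yielding $sM_P$ directly), and you correctly close this gap via the conjugacy $s(M_Ps)s^{-1}=sM_P$.
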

\begin{proof}
Let $x\in\omega(F_P)=\{1,2,3,4\}$. 
Denote by $[x,M_P(x)]$ the part of a zigzag in $N_P{(\Gamma})$ whose the first element is $x$, the last element is $M_P(x)$ and this path does not contain any other elements from $\omega(F_P)$. 
Consider the cyclic sequence 
\begin{equation}\label{eq-2}
[x,M_P(x)], [sM_P(x),M_PsM_P(x)], \dots, [(sM_P)^{m-1}(x),M_P(sM_P)^{m-1}(x)],
\end{equation}
where $m$ is the smallest positive number such that $(sM_P)^{m}(x)=x$. 
Observe that the edges $y,sy\in\omega(F_P)$ correspond to the same edge of $\Gamma$. 
Therefore, if we identify the last edge of every part of \eqref{eq-2} with the first edge of the next part, then we obtain a zigzag in $\Gamma$. 
So, every cycle in $sM_P$ defines a zigzag passing through the edges of $P$ (it is easy to see that other cycle of $sM_P$ gives a different zigzag). 
Conversely, any zigzag of $\Gamma$ passing through the edges of $P$ induces a cycle in $sM_P$ by the definition of $z$-monodromy. 
\end{proof}

\begin{lemma}\label{lem3}
If $k$ is the number of zigzags from the $z$-orientation passing through the edges of $P$, then the following assertions are fulfilled:
\begin{enumerate}
\item[$\bullet$] $k=1$ if the $z$-monodromy of $P$ is from the class $K_i$ with $i\in\{1,3,7,8\}$; 
\item[$\bullet$] $k=2$ if the $z$-monodromy of $P$ is from the class $K_i$ with $i\in\{0,4,5,11,12\}$; 
\item[$\bullet$] $k=3$ if the $z$-monodromy of $P$ is from the class $K_i$ with $i\in\{6,9,10\}$; 
\item[$\bullet$] $k=4$ if the $z$-monodromy of $P$ is from the class $K_2$.
\end{enumerate}
\end{lemma}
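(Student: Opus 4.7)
The plan is to reduce the lemma to a finite enumeration via Lemma~\ref{lem2} and then work through the 13 classes. By Lemma~\ref{lem2}, the number $k$ of zigzags from $\tau$ passing through the edges of $P$ equals the total number of cycles of the permutation $sM_P$ in $S_4$ (with fixed points counted as $1$-cycles). Since each class $K_i$ is defined by an equivalence relation on $S_4$, the first task is to check that this cycle count depends only on the class of $M_P$, not on the chosen representative; then I would pick one representative per class, left-multiply by $s=(13)(24)$, and read off the cycle structure.

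For the well-definedness, I would verify that both moves defining the classes preserve the cycle type of $sp$. If $p_2=sp_1s$, then $sp_2=s(sp_1s)=p_1 s=s(sp_1)s^{-1}$, so $sp_2$ is conjugate to $sp_1$ and has the same cycle type. If $p_2=up_1^{-1}u$ with $u\in\{t,st\}$, a short manipulation using $s^2=t^2=e$ and $st=ts$ shows that $sp_2$ is conjugate (via $t$ or $st$) to either $sp_1^{-1}$ or $p_1^{-1}s$; both of these have the same cycle type as $sp_1$, since a permutation and its inverse share cycle type, and $p_1^{-1}s$ has the same cycle type as its inverse $sp_1^{-1} \cdot (\text{something})$\ldots more directly, $(p_1^{-1}s)^{-1}=sp_1$ via $s^2=e$. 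So cycle counts of $sp$ are a class invariant.

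Next comes the pure bookkeeping: for each $K_i$ compute $sp$ for one representative and count cycles. The computations give, for example, $s\cdot id=(13)(24)$ (two $2$-cycles, hence $2$ cycles, so $K_0\Rightarrow k=2$); $s\cdot(1234)=(1432)$ (one $4$-cycle, $K_1\Rightarrow k=1$); $s\cdot(13)(24)=id$ (four fixed points, $K_2\Rightarrow k=4$); and continuing through $K_3,\dots,K_{12}$, e.g., $s\cdot(34)=(1324)$ ($K_7\Rightarrow k=1$), $s\cdot(24)=(13)$ ($K_6\Rightarrow k=3$), $s\cdot(1324)=(34)$ ($K_9\Rightarrow k=3$), $s\cdot(234)=(132)$ ($K_{11}\Rightarrow k=2$), and so on. Collecting the cycle counts yields precisely the four lists stated in the lemma.

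The main obstacle is not conceptual but organizational: making sure that (i) the class invariance argument is sound so that the enumeration really proves the lemma, and (ii) the representatives listed in each $K_i$ (especially the bigger classes $K_{11}, K_{12}$) all yield the same cycle count when left-multiplied by $s$. Both issues are resolved by the symmetry calculation above together with a short table (of the kind referenced in the appendix) checking one representative per class.
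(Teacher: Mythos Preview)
Your proposal is correct and follows essentially the same route as the paper: invoke Lemma~\ref{lem2} to reduce to counting cycles of $sM_P$, argue that this count is a class invariant, then compute one representative per class. Your invariance argument is in fact more explicit than the paper's (which simply asserts ``$q$ is conjugate to $p$ or $p^{-1}$'' without spelling out why this transfers to the cycle type of $sp$), and your representative computations match the paper's table verbatim.
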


\begin{proof}
If $K_i$ contains more than one permutation, then any two permutations $p,q\in K_i$ have the same number of cycles (since $q$ is conjugate to $p$ or $p^{-1}$). 
So, we choose an element from every of the $13$ classes and apply Lemma \ref{lem2}. 
We get the following: 
\begin{enumerate}
\item[$(K_0)$] $s(id)=s=(13)(24)$,
\item[$(K_1)$] $s(1234)=(1432)$,
\item[$(K_2)$] $s(13)(24)=s^2=id$,
\item[$(K_3)$] $s(1432)=(1234)$,
\item[$(K_4)$] $s(14)(23)=(12)(34)$,
\item[$(K_5)$] $s(12)(34)=(14)(23)$,
\item[$(K_6)$] $s(24)=(13)$,
\item[$(K_7)$] $s(34)=(1324)$,
\item[$(K_8)$] $s(23)=(1342)$,
\item[$(K_9)$] $s(1324)=(34)$,
\item[$(K_{10})$] $s(1243)=(14)$,
\item[$(K_{11})$] $s(234)=(132)$,
\item[$(K_{12})$] $s(243)=(134)$.
\end{enumerate}
Note that each fixed point is a $1$-cycle. 
\end{proof}

A special pair $P$ is said to be {\it essential} if every zigzag from the $z$-orientation passes through the edges of this pair.  

\begin{exmp}\label{ex6}{\rm
Consider the following examples of essential pairs: 
\begin{enumerate}
\item[(1)] Any special pair in the bypiramid $BP_n$ is essential (see Example \ref{ex3}). The $z$-monodromies of such pairs are from the class $K_i$ with $i\in\{0,1,2,3\}$. 
\item[(2)] The triangulation $\Gamma_{2,3,4,5}$ (Example \ref{ex4}) contains the following essential pairs: 
$v_0b,bv_1$ (the class $K_4$), $av_0,v_0b$ (the class $K_5$), $bv_6,v_6v_7$ (the class $K_{11}$), $bv_1,v_1v_2$ (the class $K_{12}$). 
\item[(3)] The triangulation $\Gamma_{2,4,3,4}$ (Example \ref{ex5}) contains the following essential pairs: 
$v_0b,bv_1$ (the class $K_6$), $bv_1,v_1v_2$ (the class $K_9$), $v_1v_2,v_2v_3$ (the class $K_{10}$). 
\end{enumerate}
}\end{exmp}

\begin{lemma}\label{lem4}
Let $g:\partial F_P\to\partial F_{P'}$ be a special homeomorphism. 
The number of zigzgas (up to reversing) in $\Gamma\#_{g}\Gamma'$ passing through the edges of $g(\omega(F_P))=\omega(F_{P'})$ is equal to the number of cycles in $g^{-1}M_{P'}gM_P$.
\end{lemma}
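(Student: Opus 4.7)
The plan mirrors the proof of Lemma \ref{lem2}: the self-identification permutation $s$ (which reglued $N_P(\Gamma)$ back into $\Gamma$) is replaced by the cross-identification $g$, followed by a detour through $\Gamma'$ encoded by $M_{P'}$, and then by $g^{-1}$. The preliminary observation is that, exactly as in Lemma \ref{lem2}, each oriented edge $x \in \omega(F_P)$ determines an arc $[x, M_P(x)]$ of a zigzag in $N_P(\Gamma)$ which meets $\Omega(F_P)$ only at its two endpoints. In particular this arc is disjoint from the interior of $F_P$, so it survives unchanged inside $\Gamma\#_g\Gamma'$. The analogous statement holds for the arcs $[y, M_{P'}(y)]$ in $N_{P'}(\Gamma')\setminus \mathrm{int}(F_{P'})$ for $y \in \omega(F_{P'})$.

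First I would follow a single zigzag of $\Gamma\#_g\Gamma'$ through the glued boundary. Starting at $x \in \omega(F_P)$, the zigzag traverses the $\Gamma$-side along $[x, M_P(x)]$ and hits the glued boundary at an edge identified with $g(M_P(x)) \in \omega(F_{P'})$; it then crosses the $\Gamma'$-side along $[g(M_P(x)), M_{P'}(g(M_P(x)))]$ and returns to the boundary at the edge $\varphi(x) := g^{-1} M_{P'} g M_P(x) \in \omega(F_P)$. Iterating gives a cyclic sequence of arcs whose endpoints in $\omega(F_P)$ trace out exactly the orbit of $x$ under the permutation $\varphi$.

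Next I would concatenate the arcs along one orbit of $\varphi$ into a single cyclic sequence of edges of $\Gamma\#_g\Gamma'$ and verify, as at the end of Lemma \ref{lem2}, that the zigzag conditions (adjacency of consecutive edges and the three-consecutive-not-in-one-face rule) hold across each boundary crossing. The assignment \emph{orbit of $\varphi$} $\mapsto$ \emph{zigzag} is then a bijection onto the set of zigzags (up to reversing) of $\Gamma\#_g\Gamma'$ that meet $\omega(F_{P'})$, since any such zigzag is reconstructed canonically from its decomposition into the arcs delimited by successive crossings of the glued boundary, and distinct orbits clearly produce distinct zigzags.

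The main obstacle I expect is the careful bookkeeping for orientations across the gluing: one must check that the direction in which an incoming $\Gamma$-arc arrives at $M_P(x)$ matches the direction in which the outgoing $\Gamma'$-arc leaves $g(M_P(x))$, so that concatenation really yields a valid zigzag. This is precisely what the hypothesis that $g$ is \emph{special} provides, since by definition $g$ transfers $\omega(F_P)$ to $\omega(F_{P'})$ as sets of oriented edges; once this compatibility is recorded, the remainder of the verification is the same edgewise argument used for $sM_P$ in Lemma \ref{lem2}.
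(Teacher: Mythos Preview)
Your proposal is correct and follows essentially the same approach as the paper: build the cyclic sequence
\[
[x,M_P(x)],\;[gM_P(x),M_{P'}gM_P(x)],\;[g^{-1}M_{P'}gM_P(x),M_P(g^{-1}M_{P'}gM_P)(x)],\;\dots
\]
and identify its closing up with a zigzag of $\Gamma\#_g\Gamma'$, so that orbits of $g^{-1}M_{P'}gM_P$ on $\omega(F_P)$ correspond bijectively to the zigzags meeting the glued boundary. Your additional remarks on why concatenation across the boundary is legitimate (via the orientation-preserving property of a special $g$) and on injectivity are more explicit than the paper's brief ``similar to Lemma~\ref{lem2}'', but the argument is the same.
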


\begin{proof}
The proof is similar to the proof of Lemma \ref{lem2}. 
Consider the cyclic sequence
$$[x,M_{P}(x)], [gM_{P}(x),M_{P'}gM_{P}(x)], [g^{-1}M_{P'}gM_{P}(x),M_{P}(g^{-1}M_{P'}gM_{P})(x)],$$ 
$$\dots$$
$$[gM_P(g^{-1}M_{P'}gM_P)^{m-1}(x), M_{P'}gM_P(g^{-1}M_{P'}gM_P)^{m-1}(x)],$$
where $m$ is the smallest positive number such that $(g^{-1}M_{P'}gM_P)^{m}(x)=x$. 
We identify the last edge of any part in the above sequence with the first edge of the next part and obtain a zigzag in $\Gamma\#_{g}\Gamma'$. 
In this way, we get a one-to-one correspondence between zigzags passing through edges $g(\omega(F_P))=\omega(F_{P'})$ and cycles in the permutation $g^{-1}M_{P'}gM_P$. 
\end{proof}

Now, we prove Theorem \ref{th3}. 
Suppose that the $z$-orientation of $\Gamma$ contains more than one zigzag. 
Then there is a face $F$ such that at least two zigzags from the $z$-orientation pass through the edges of $F$. 
One of these zigzags contains the edge of type II on $F$. 
Denote this edge by $e$. 
There are two possibilities:

$\bullet$ $e$ occurs in two distinct zigzags;

$\bullet$ $e$ occurs twice in one zigzag $Z$.

\noindent We assert that for each of these cases there is a special pair $P$ contained in more than one zigzag from the $z$-orientation. 
The first case is obvious. 
In the second case, a zigzag distinct from $Z$ (we assume that this zigzag is from the $z$-orientation) passes through the edges of type I on $F$; the next edge $e'$ in this zigzag is on a face adjacent to $F$. 
By $z$-homogenity, $e'$ is of type II and it is easy to see that $e'$ has a common vertex with $e$. 
So, $e$ and $e'$ form a special pair. 

By Lemma \ref{lem3}, the $z$-monodromy $M_P$ belongs to one of the classes $K_i$ with $i\in\{0, 2, 4, 5, 6, 9, 10, 11, 12\}$. 
From this moment, we assume that $\Gamma'$ is a spherical triangulation and $P'$ is an essential pair. 
If $g:\partial F_P\to\partial F_{P'}$ is a special homeomorphism such that $g^{-1}M_{P'}gM_P$ is a $4$-cycle, then (by Lemma \ref{lem4}) the number of zigzags in $\Gamma\#_{g}\Gamma'$ is less than the number of zigzags in $\Gamma$. 
We show that for each of the nine possibilities of $M_P$ mentioned above there are suitable $\Gamma'$ and $P'$. 

Recall that $\omega(F)=\{1,2,3,4\}$ and assume that $\omega(F')=\{1',2',3',4'\}$. 
We will use the $z$-monodromies on spherical triangulations associated with essential pairs mentioned in Example \ref{ex6}. 
All gluings will be provided through the special homeomorphism $g:\partial F_P\to\partial F_{P'}$ transferring every $k\in\omega(F_P)$ to $k'$. 

If $K_i$ contains more than one permutation and $p\in K_i$, then the remaining elements of $K_i$ can be obtained from $p$ by the renumerations of the edges in the $4$-gonal face $F_P$ and the change of the $z$-orientation by the opposite. 
Therefore, if $K_i$ contains more than one permutation, then it is sufficiently to find suitable $\Gamma'$ and $P'$ only for one permutation from $K_i$. 

($K_0$). If $M_P=id$, then we define $M_{P'}$ as the permutation from $K_9$ or $K_{10}$ (see Example \ref{ex6}(3)). 
Since the permutations from $K_9$ and $K_{10}$ are $4$-cycles, the composition 
$$g^{-1}M_{P'}gM_P=g^{-1}M_{P'}g$$
also is a $4$-cycle. 

($K_2$). If  $M_P=(13)(24)$, then we take $M_{P'}=(1'2'3'4')\in K_1$ (see Example \ref{ex6}(1)) and 
$$g^{-1}M_{P'}gM_P=(1234)(13)(24)=(1432)$$
is a $4$-cycle. 

($K_4$). If  $M_P=(14)(23)$, then we take $M_{P'}=(2'4')\in K_6$ (see Example \ref{ex6}(3)) and 
$$g^{-1}M_{P'}gM_P=(24)(14)(23)=(1234)$$
is a $4$-cycle. 

($K_5$). If  $M_P=(12)(34)$, then we take $M_{P'}=(2'4')\in K_6$ (see Example \ref{ex6}(3)) and 
$$g^{-1}M_{P'}gM_P=(24)(12)(34)=(1432)$$
is a $4$-cycle. 

($K_6$). If $M_P$ belongs to $K_6$, then we take $M_{P'}$ from $K_4$ or $K_5$ (see Example \ref{ex6}(2)) and come to the cases ($K_4$) and ($K_5$). 

($K_9$). If $M_P$ belongs to $K_9$, then we take $M_{P'}$ from $K_0$ (see Example \ref{ex6}(1)) and come to the case ($K_0$). 

($K_{10}$). If $M_P$ belongs to $K_{10}$, then we take $M_{P'}$ from $K_0$ (see Example \ref{ex6}(1)) and come to the case ($K_0$). 

($K_{11}$). If  $M_P=(234)$, then we take $M_{P'}=(1'2'3'4')\in K_1$ (see Example \ref{ex6}(1)) and 
$$g^{-1}M_{P'}gM_P=(1234)(234)=(1243)$$
is a $4$-cycle. 

($K_{12}$). If  $M_P=(143)$, then we take $M_{P'}=(2'4')\in K_6$ (see Example \ref{ex6}(3)) and 
$$g^{-1}M_{P'}gM_P=(24)(143)=(1243)$$
is a $4$-cycle. 

So, any $z$-homogeneous triangulation of the surface $M$ can be modified to other $z$-homogeneous triangulation of $M$ with a lesser number of zigzags. 
Recursively, we come to a $z$-homogeneous triangulation of $M$ which is also $z$-knotted. 

\section{Appendix}

We present all necessary calculations for elements of $S_4$ used in Section 4. 
\begin{center}
\begin{tabular}{|C|C|C|C|C|C|C|C|}
\hline
\emph{$M_P$} & \emph{$M_P^{-1}$} & \emph{$sM_P$} & $\emph{$tM_P^{-1}$}$ & $\emph{$stM_P^{-1}$}$ & $\emph{$sM_Ps$}$ & $\emph{$tM_P^{-1}t$}$ & $\emph{$stM_P^{-1}st$}$ \\ \hline
$\emph{id}$ & $\emph{id}$ & $(13)(24)$ & $(12)(34)$ & $(14)(23)$ & $\emph{id}$ & $\emph{id}$ & $\emph{id}$ \\ \hline 
$(34)$ & $(34)$ & $(1324)$ & $(12)$ & $(1423)$ & $(12)$ & $(34)$ & $(12)$ \\ \hline 
$(23)$ & $(23)$ & $(1342)$ & $(1243)$ & $(14)$ & $(14)$ & $(14)$ & $(23)$ \\ \hline 
$(234)$ & $(243)$ & $(132)$ & $(123)$ & $(142)$ & $(124)$ & $(134)$ & $(123)$ \\ \hline 
$(243)$ & $(234)$ & $(134)$ & $(124)$ & $(143)$ & $(142)$ & $(143)$ & $(132)$ \\ \hline 
$(24)$ & $(24)$ & $(13)$ & $(1234)$ & $(1432)$ & $(24)$ & $(13)$ & $(13)$ \\ \hline 
$(12)$ & $(12)$ & $(1423)$ & $(34)$ & $(1324)$ & $(34)$ & $(12)$ & $(34)$ \\ \hline 
$(12)(34)$ & $(12)(34)$ & $(14)(23)$ & $\emph{id}$ & $(13)(24)$ & $(12)(34)$ & $(12)(34)$ & $(12)(34)$ \\ \hline 
$(123)$ & $(132)$ & $(142)$ & $(143)$ & $(124)$ & $(134)$ & $(124)$ & $(234)$ \\ \hline 
$(1234)$ & $(1432)$ & $(1432)$ & $(13)$ & $(24)$ & $(1234)$ & $(1234)$ & $(1234)$ \\ \hline 
$(1243)$ & $(1342)$ & $(14)$ & $(14)$ & $(1243)$ & $(1342)$ & $(1243)$ & $(1342)$ \\ \hline 
$(124)$ & $(142)$ & $(143)$ & $(134)$ & $(243)$ & $(234)$ & $(123)$ & $(134)$ \\ \hline 
$(132)$ & $(123)$ & $(234)$ & $(243)$ & $(134)$ & $(143)$ & $(142)$ & $(243)$ \\ \hline 
$(1342)$ & $(1243)$ & $(23)$ & $(23)$ & $(1342)$ & $(1243)$ & $(1342)$ & $(1243)$ \\ \hline 
$(13)$ & $(13)$ & $(24)$ & $(1432)$ & $(1234)$ & $(13)$ & $(24)$ & $(24)$ \\ \hline 
$(134)$ & $(143)$ & $(243)$ & $(132)$ & $(234)$ & $(123)$ & $(234)$ & $(124)$ \\ \hline 
$(13)(24)$ & $(13)(24)$ & $\emph{id}$ & $(14)(23)$ & $(12)(34)$ & $(13)(24)$ & $(13)(24)$ & $(13)(24)$ \\ \hline 
$(1324)$ & $(1423)$ & $(34)$ & $(1324)$ & $(34)$ & $(1423)$ & $(1423)$ & $(1324)$ \\ \hline 
$(1432)$ & $(1234)$ & $(1234)$ & $(24)$ & $(13)$ & $(1432)$ & $(1432)$ & $(1432)$ \\ \hline 
$(142)$ & $(124)$ & $(123)$ & $(234)$ & $(132)$ & $(243)$ & $(132)$ & $(143)$ \\ \hline 
$(143)$ & $(134)$ & $(124)$ & $(142)$ & $(123)$ & $(132)$ & $(243)$ & $(142)$ \\ \hline 
$(14)$ & $(14)$ & $(1243)$ & $(1342)$ & $(23)$ & $(23)$ & $(23)$ & $(14)$ \\ \hline 
$(1423)$ & $(1324)$ & $(12)$ & $(1423)$ & $(12)$ & $(1324)$ & $(1324)$ & $(1423)$ \\ \hline 
$(14)(23)$ & $(14)(23)$ & $(12)(34)$ & $(13)(24)$ & $\emph{id}$ & $(14)(23)$ & $(14)(23)$ & $(14)(23)$ \\ \hline 
\end{tabular}
\vspace{0.1cm}
\captionof{table}{ }
\end{center}

\end{document}